\theoremstyle{plain}
\newtheorem{thm}{Theorem}[section]
\newtheorem{cor}[thm]{Corollary} 
\newtheorem{lemma}[thm]{Lemma} 
\newtheorem{prop}[thm]{Proposition}
\theoremstyle{remark}
\theoremstyle{definition}
\newtheorem{defi}[thm]{Definition}
\newtheorem{example}[thm]{Example}
\newtheorem{remark}[thm]{Remark}
\newtheorem{notation}[thm]{Notation}
\newcommand\Abar{{\overline A}}
\newcommand\Afr{{\mathfrak A}}
\newcommand\Cc{{\mathcal{C}}}
\newcommand\conv{\operatorname{conv}}
\newcommand\Cpx{{\mathbf C}}
\newcommand\diag{\text{\rm diag}}
\newcommand\eps{\epsilon}
\newcommand\Fc{{\mathcal{F}}}
\newcommand\Gc{{\mathcal{G}}}
\newcommand\id{{\operatorname{id}}}
\newcommand\KEu{{\EuScript K}}                   
\newcommand\Mcal{{\mathcal{M}}}
\newcommand\Nats{{\mathbf N}}
\newcommand\rank{\mathrm{rank}\,}
\newcommand\Reals{{\mathbf R}}
\newcommand\Tcirc{{\mathbf T}}
\newcommand\tr{{\mathrm{tr}}}
\newcommand\Uc{{\mathcal{U}}}
\newcommand\Vt{{\widetilde V}}
\newcommand\Xbar{{\overline X}}
\newcommand\Ybar{{\overline Y}}
\newcommand\Xh{{\widehat X}}
\newcommand\Yh{{\widehat Y}}
\begin{document}

\title[Matrices of unitary moments]{Matrices of unitary moments}

\author[Dykema, Juschenko]{Ken Dykema$^{*}$, Kate Juschenko}
\address{Department of Mathematics, Texas A\&M University,
College Station, TX 77843-3368, USA}
\email{kdykema@math.tamu.edu, juschenko@math.tamu.edu}
\thanks{\footnotesize $^{*}$Research supported in part by NSF grant DMS-0600814}

\subjclass[2000]{46L10, (15A48)}

\keywords{Connes' embedding problem, unitary moments, correlation matrices}


\begin{abstract}
We investigate certain matrices composed of mixed, second--order moments of unitaries.
The unitaries are taken from C$^*$--algebras with moments taken with respect to
traces, or, alternatively, from matrix algebras with the usual trace.
These sets are of interest in light of a theorem of E.\ Kirchberg about Connes' embedding problem.
\end{abstract}

\maketitle

\section{Introduction}

One fundamental question about operator algebras is
Connes' embedding problem, which in its original formulation asks
whether every II$_1$--factor $\Mcal$ embeds in the ultrapower
$R^\omega$ of the hyperfinite II$_1$--factor $R$.
This is well known to be equivalent to the question of whether all elements of II$_1$--factors
possess matricial microstates, (which were introduced by Voiculescu~\cite{V94} for free entropy),
namely, whether such elements are approximable in $*$--moments by matrices.
Connes' embedding problem is 
known to be equivalent to a number of different problems, in large part due to a remarkable
paper~\cite{Ki93} of Kirchberg.
(See also the survey~\cite{Oz04}, and the papers \cite{P96}, \cite{Ra99}, \cite{Ra04}, \cite{B04}, \cite{Ra06},
\cite{CD08}, \cite{KS08}, \cite{Ra}, \cite{JP} for results with bearing on Connes' embedding problem.)

In Proposition~4.6 of~\cite{Ki93},
Kirchberg proved that, in order to show that a finite von Neumann algebra $\Mcal$ with faithful tracial state
$\tau$ embeds in R$^\omega$,
it would be enough to show that for all $n$, all unitary elements $U_1,\ldots,U_n$ in $\Mcal$
and all $\eps>0$,
there is $k\in\Nats$ and there are $k\times k$ unitary matrices $V_1,\ldots,V_n$ such that $|\tau(U_i^*U_j)-\tr_k(V_i^*V_j)|<\eps$
for all $i,j\in\{1,\ldots,n\}$,
where $\tr_k$ is the normalized trace on $M_k(\Cpx)$.
(He also required $|\tau(U_i)-\tr_k(V_i)|<\eps$, but this formally stronger condition
is easily satisfied by taking the $n+1$ unitaries $U_1,\ldots,U_n,U_{n+1}=I$ in $\Mcal$
finding $k\times k$ unitaries $\Vt_1,\ldots,\Vt_{n+1}$, so that $|\tau(U_i^*U_j)-\tr_k(\Vt_i^*\Vt_j)|<\eps$,
and letting $V_i=\Vt_{n+1}^*\Vt_i$.)
It is, therefore, of interest to consider the set of possible
second--order mixed moments of unitaries in such $(\Mcal,\tau)$
or, equivalently, of unitaries in C$^*$--algebras with respect to tracial states.
(See also~\cite{Ra99}, where some similar sets were considered by F.\ R\u adulescu.)

\begin{defi}\label{def:G}
Let $\Gc_n$ be the set of all $n\times n$ matrices $X$ of the form
\begin{equation}\label{eq:X}
X=\big(\tau(U_i^*U_j)\big)_{1\le i,j\le n}
\end{equation}
as $(U_1,\ldots, U_n)$ runs over all $n$--tuples of unitaries
in all C$^*$--algebras $A$ possessing a faithful tracial
state $\tau$.
\end{defi}

\begin{remark}\label{rem:GNS}
The set--theoretic difficulties in the phrasing of Definition~\ref{def:G} can be evaded by insisting that $A$ be represented on
a given separable Hilbert space.
Alternatively,
let $\Afr=\Cpx\langle U_1,\ldots,U_n\rangle$
denote the universal, unital, complex $*$--algebra generated by unitary elements $U_1,\ldots,U_n$.
A linear functional $\phi$ on $\Afr$ is positive if $\phi(a^*a)\ge0$ for all $a\in\Afr$.
By the usual Gelfand--Naimark--Segal construction, any such positive functional $\phi$
gives rise to a Hilbert space $L^2(\Afr,\phi)$ and a $*$--representation $\pi_\phi:\Afr\to B(L^2(\Afr,\phi))$.
Thus, the set $\Gc_n$ equals the set of all matrices $X$ as in~\eqref{eq:X}
as $\tau$ runs over all positive, tracial, unital, linear functionals $\tau$ on $\Afr$.
\end{remark}

\begin{defi}\label{def:F}
Let $\Fc_n$ be the closure of the set
\[
\big\{\big(\tr_k(V_i^*V_j)\big)_{1\le i,j\le n}\mid k\in\Nats,\,V_1,\ldots,V_n\in\Uc_k\big\},
\]
where $\Uc_k$ is the group of $k\times k$ unitary matrices.
\end{defi}

A {\em correlation matrix} is a complex,
positive semidefinite
matrix having all diagonal entries equal to $1$.
Let $\Theta_n$ be the set of all $n\times n$ correlation matrices.
Clearly, we have
\[
\Fc_n\subseteq\Gc_n\subseteq\Theta_n\,.
\]
Kirchberg's result is that Connes' embedding problem is equivalent to
the problem of whether $\Fc_n=\Gc_n$ holds for all $n$.

\begin{prop}
For each $n$,
\begin{enumerate}[(i)]
\item \label{it0} $\Fc_n$ and $\Gc_n$ are invariant under conjugation with $n\times n$ diagonal unitary matrices
and permutation matrices,
\item \label{it1} $\Fc_n$ and $\Gc_n$ are compact, convex subsets of $\Theta_n$,
\item \label{it2} $\Fc_n$ and $\Gc_n$ are closed under taking Schur products of matrices.
\end{enumerate}
\end{prop}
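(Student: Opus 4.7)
The three parts are essentially routine once one has the right model for each set; my plan is to handle each by producing an explicit construction on the level of representing unitaries (or tracial states) and then using the obvious continuity/closure properties.

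For part~\eqref{it0}, I would just compute: if $D=\diag(d_1,\ldots,d_n)$ with $|d_i|=1$, then the $(i,j)$ entry of $D^*XD$ is $\overline{d_i}d_j\tau(U_i^*U_j)=\tau((d_iU_i)^*(d_jU_j))$, and the $(d_iU_i)$ are still unitaries in the same C$^*$--algebra (or in $M_k(\Cpx)$). For a permutation matrix $P$ associated to $\sigma\in S_n$, conjugation by $P$ simply reindexes the tuple $(U_1,\ldots,U_n)$ as $(U_{\sigma(1)},\ldots,U_{\sigma(n)})$. Both operations stay within the defining families of $\Fc_n$ and $\Gc_n$.

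For part~\eqref{it1}, compactness of $\Fc_n$ is immediate from the definition (closed, and bounded because correlation matrices have entries of modulus at most $1$). For $\Gc_n$, I would use the description in Remark~\ref{rem:GNS}: the tracial state space of the universal unital $*$--algebra $\Afr=\Cpx\langle U_1,\ldots,U_n\rangle$ (equivalently, of the universal C$^*$--algebra $C^*(\mathbb{F}_n)$ generated by $n$ unitaries) is weak--$*$ compact, and the map $\tau\mapsto(\tau(U_i^*U_j))_{i,j}$ is weak--$*$ continuous, so its image $\Gc_n$ is compact. For convexity of $\Gc_n$, given $X^{(1)},X^{(2)}\in\Gc_n$ realised in $(A_1,\tau_1),(A_2,\tau_2)$ with unitaries $U_i^{(1)},U_i^{(2)}$, and given $t\in[0,1]$, I would consider the direct sum $A_1\oplus A_2$ with trace $t\tau_1\oplus(1-t)\tau_2$ and unitaries $U_i^{(1)}\oplus U_i^{(2)}$ to produce $tX^{(1)}+(1-t)X^{(2)}$. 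For $\Fc_n$, direct sums only give rational convex combinations: if $V_i\in\Uc_k$ and $W_i\in\Uc_l$, then
\[
\tr_{k+l}\big((V_i\oplus W_i)^*(V_j\oplus W_j)\big)=\tfrac{k}{k+l}\tr_k(V_i^*V_j)+\tfrac{l}{k+l}\tr_l(W_i^*W_j).
\]
To reach arbitrary $t\in[0,1]$ I would approximate $t$ by $k/(k+l)$ using $V_i^{\oplus p}$ and $W_i^{\oplus q}$ (equivalently $V_i\otimes I_p,\,W_i\otimes I_q$), which preserves the moments; since $\Fc_n$ is closed by definition, the limit $tX^{(1)}+(1-t)X^{(2)}$ lies in $\Fc_n$. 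The main (minor) obstacle is this approximation step, but it is standard.

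For part~\eqref{it2}, the key observation is that Schur products are realised by tensor products of unitaries. Given $X^{(1)}=(\tau_1(U_i^*U_j))\in\Gc_n$ and $X^{(2)}=(\tau_2(V_i^*V_j))\in\Gc_n$, I would set $W_i=U_i\otimes V_i$ in $A_1\otimes A_2$ equipped with the tracial state $\tau_1\otimes\tau_2$ (either minimal or maximal tensor product works, since we only need a tracial state); then
\[
(\tau_1\otimes\tau_2)(W_i^*W_j)=\tau_1(U_i^*U_j)\,\tau_2(V_i^*V_j)=X^{(1)}_{ij}X^{(2)}_{ij},
\]
so $X^{(1)}\odot X^{(2)}\in\Gc_n$. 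The identical construction with $V_i,W_i$ matrix unitaries, using $V_i\otimes W_i\in M_{kl}(\Cpx)$ and $\tr_{kl}=\tr_k\otimes\tr_l$, shows the Schur product of representatives of $\Fc_n$ is represented in $\Fc_n$; closedness of $\Fc_n$ then upgrades this to arbitrary elements of $\Fc_n$.
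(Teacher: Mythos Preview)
Your proposal is correct and follows essentially the same approach as the paper: invariance in~(\ref{it0}) by modifying the representing unitaries, compactness in~(\ref{it1}) via boundedness of $\Theta_n$ together with closedness (the paper phrases closedness of $\Gc_n$ as ``a pointwise limit of positive traces on $\Afr$ is a positive trace'' rather than invoking weak--$*$ compactness of the tracial state space, but this is the same content), convexity of $\Fc_n$ via repeated direct sums to get rational coefficients, and Schur products in~(\ref{it2}) via tensor products of unitaries. The only cosmetic difference is that for convexity of $\Gc_n$ the paper works directly with convex combinations of positive traces on $\Afr$ (using Remark~\ref{rem:GNS}) rather than your direct--sum construction, but the two are equivalent.
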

\begin{proof}
Part~(\ref{it0}) is clear.
Note that $\Theta_n$ is a norm--bounded subset of $M_n(\Cpx)$.
That $\Fc_n$ is closed is evident.
That $\Gc_n$ is closed follows from the description in Remark~\ref{rem:GNS} and the fact
that a pointwise limit of positive traces on $\Afr$ is a positive trace.
This proves compactness.
Convexity of $\Fc_n$ follows from by observing that if $V$ is a $k\times k$ unitary and $V'$ is a $k'\times k'$ unitary,
then for arbitrary $\ell,\ell'\in\Nats$,
\[
\underset{\ell\text{ times}}{\underbrace{V\oplus\cdots\oplus V}}\oplus
\underset{\ell'\text{ times}}{\underbrace{V'\oplus\cdots\oplus V'}}
\]
can be realized as a block--diagonal $(k\ell+k'\ell')\times(k\ell+k'\ell')$ matrix whose normalized trace is
\[
\frac{k\ell}{k\ell+k'\ell'}\tr_k(V)+\frac{k'\ell'}{k\ell+k'\ell'}\tr_{k'}(V').
\]
Convexity of $\Gc_n$ follows because a convex combination of positive traces on $\Afr$ is a positive trace.
This proves~\eqref{it1}.

Closedness of $\Fc_n$ under taking Schur products follows by observing that if $V$ and $V'$ are unitaries as above,
then $V\otimes V'$ is a $kk'\times kk'$ unitary whose normalized trace is $\tr_k(V)\tr_{k'}(V')$.
For $\Gc_n$, we observe that if $U$ and respectively, $U'$, are unitaries in C$^*$--algebras $A$ and $A'$ having tracial states $\tau$
and $\tau'$, then the spatial tensor product C$^*$--algebra $A\otimes A'$ has tracial state $\tau\otimes\tau'$ that takes value
$\tau(U)\tau'(U')$ on
the unitary $U\otimes U'$.
This proves~\eqref{it2}.
\end{proof}

Since it is important to decide whether we have $\Fc_n=\Gc_n$ for all $n$, it is interesting to learn more about the sets $\Fc_n$.
A first question is whether $\Fc_n=\Theta_n$ holds.
In Section~\ref{sec:ext}, we show that this holds for $n=3$ but fails for $n\ge4$.
The proof relies on a characterization of extreme points of $\Theta_n$, and it uses also
the set $\Cc_n$ of matrices of moments of commuting unitaries.
In Section~\ref{sec:Reals} we prove $M_n(\Reals)\cap\Theta_n\subseteq\Fc_n$, and some further results
concerning $\Cc_n$.
In Section~\ref{sec:int}, we show that $\Fc_n$ has nonempty interior, as a subset of $\Theta_n$.

\section{Extreme points of $\Theta_n$ and some consequences}
\label{sec:ext}

The set $\Theta_n$ of $n\times n$ 
correlation matrices is embedded in the affine space consisting of the self--adjoint complex
matrices having all diagonal entries equal to $1$;
it is just the intersection of the set of positive, semidefinite matrices with this space.
Every element of $\Theta_n$ is bounded in norm by $n$
(\emph{cf}\/ Remark~\ref{rem:Z=0}), and $\Theta_n$ is a compact, convex space.
Since, in the space of self--adjoint matrices, every positive definite matrix is the center of a ball consisting
of positive matrices, it is clear that the boundary of $\Theta_n$ (for $n\ge2$) consists of singular matrices.

The extreme points of $\Theta_n$ and $\Theta_n\cap M_n(\Reals)$
have been studied in~\cite{CV79}, \cite{L80}, \cite{GPW90} and~\cite{LT94}.
In this section, we will use an easy characterization of the extreme points of $\Theta_n$
to draw some conclusions about matrices of unitary moments.
The papers cited above contain the facts about extreme points of $\Theta_n$ found below, and have results going
well beyond.
However, for completeness and for use later in examples, we provide proofs, which are brief.

We also introduce the subset $\Cc_n$ of $\Fc_n$, consisting of matrices 
of moments of commuting unitaries.

This is a convenient place to recall the following standard fact.
We include a proof for convenience.
\begin{lemma}\label{lem:frame}
The set of all $X\in\Theta_n$ of rank $r$ is the set of all
frame operators $X=F^*F$ of frames $F=(f_1,\ldots,f_n)$, consisting of $n$ unit vectors $f_j\in\Cpx^r$, where $r=\rank(X)$.
If, in addition, $X\in M_n(\Reals)$, then the frame $f_1,\ldots,f_n$ can be chosen in $\Reals^r$.
\end{lemma}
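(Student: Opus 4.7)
The plan is to exhibit the factorization via the spectral theorem and track the normalization forced by the diagonal-one condition. For the forward direction, suppose $X\in\Theta_n$ has rank $r$. Diagonalize $X=UDU^*$ with $U$ unitary and $D$ diagonal with exactly $r$ positive entries $\lambda_1,\ldots,\lambda_r$ (and zeros elsewhere). Let $F$ be the $r\times n$ matrix obtained by taking the first $r$ rows of $D^{1/2}U^*$. One checks directly that $F^*F=X$, and $F$ has rank $r$. The columns $f_1,\ldots,f_n$ of $F$ satisfy
\[
\|f_j\|^2=(F^*F)_{jj}=X_{jj}=1,
\]
so they are unit vectors in $\Cpx^r$, and they span $\Cpx^r$ because $\rank(F)=r$.

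Conversely, given any $n$ unit vectors $f_1,\ldots,f_n\in\Cpx^r$ spanning $\Cpx^r$, let $F$ be the $r\times n$ matrix with these vectors as columns and set $X:=F^*F$. Then $X$ is positive semidefinite, and $X_{jj}=\|f_j\|^2=1$, so $X\in\Theta_n$. Using the standard identity $\rank(F^*F)=\rank(F)$ together with the spanning hypothesis yields $\rank(X)=r$.

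For the real case, if $X\in M_n(\Reals)\cap\Theta_n$, the spectral theorem for real symmetric matrices supplies an orthogonal $U\in M_n(\Reals)$ and real eigenvalues, so the matrix $F$ constructed above lies in $M_{r,n}(\Reals)$ and its columns are unit vectors in $\Reals^r$; the converse direction over $\Reals$ is identical. Since this lemma is essentially the Gram/Cholesky factorization of a positive semidefinite matrix specialized to diagonal entries equal to $1$, there is no real obstacle; the only point requiring care is verifying that the truncation to $r$ rows is permissible (which is immediate from the vanishing of the corresponding eigenvalues) and that $\rank(F^*F)=\rank(F)$ gives the exact-rank conclusion in both directions.
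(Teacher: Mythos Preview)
Your proof is correct and follows essentially the same route as the paper: both arguments use the spectral decomposition $X=UDU^*$ and take $F=\diag(\lambda_1,\ldots,\lambda_r)^{1/2}V^*$, where $V^*$ consists of the first $r$ rows of $U^*$ (equivalently, the eigenvectors for the nonzero eigenvalues), then read off the unit-norm and spanning conditions from $X_{jj}=1$ and $\rank(F)=r$. The real case is handled identically in both by choosing real eigenvectors.
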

\begin{proof}
Every frame operator $F^*F$ as above clearly belongs to $\Theta_n$ and has rank $r$.

Recall that the support projection of a Hermitian matrix $X$
is the projection onto the orthocomplement of the nullspace of $X$.
Let $P$ be the support projection of $X$ and let $\lambda_1\ge\cdots\ge\lambda_r>0$
be the nonzero eigenvalues of $X$ with corresponding orthonormal eigenvectors $g_1,\ldots,g_r\in\Cpx^n$.
Let $V:\Cpx^r\to P(\Cpx^n)$ be the isometry defined by $e_i\mapsto g_i$, where $e_1,\ldots,e_r$ are the standard
basis vectors of $\Cpx^r$.
So $P=VV^*$.
Then $X=F^*F$, where $F$ is the $r\times n$ matrix
\[
F=V^*X^{1/2}=\diag(\lambda_1,\ldots,\lambda_r)^{1/2}V^*.
\]
If $f_1,\ldots,f_n\in\Cpx^r$ are the columns of $F$, then $\|f_i\|=X_{ii}=1$ and the linear span of
$f_1\ldots,f_n$ is $\Cpx^r$.
Thus, $f_1,\ldots, f_n$ comprise a frame.

If $X$ is real, then the vectors $g_1,\ldots,g_r$ can be chosen in $\Reals^n$.
Then $V$ and $X^{1/2}$ are real matrices and $f_1,\ldots,f_n$ are in $\Reals^r$.
\end{proof}

\begin{lemma}\label{lem:XtY}
Let $X\in M_n(\Cpx)$ be a positive semidefinite matrix and let $P$
be the support projection of $X$.
Then a Hermitian $n\times n$ matrix $Y$ has the property that there is $\eps>0$
such that $X+tY$ is positive semidefinite for all $t\in(-\eps,\eps)$ if and only if
$Y=PYP$.
\end{lemma}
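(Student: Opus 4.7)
The statement decouples cleanly into a sufficient direction and a necessary direction, so I would prove the two implications separately.

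For the easy ``$\Leftarrow$'' direction, I will use the fact that $X$ restricted to the range of $P$ is strictly positive. Concretely, if $Y=PYP$ then both $X$ and $tY$ annihilate $\ker X = \operatorname{ran}(I-P)$ on either side, so for any $v\in\Cpx^n$ we have $\langle(X+tY)v,v\rangle = \langle(X+tY)Pv,Pv\rangle$. Letting $\lambda>0$ denote the smallest nonzero eigenvalue of $X$, the restriction of $X$ to $\operatorname{ran}(P)$ is bounded below by $\lambda$, so the estimate
\[
\langle(X+tY)Pv,Pv\rangle \ge (\lambda - |t|\,\|Y\|)\,\|Pv\|^2
\]
gives positivity whenever $|t|<\lambda/(\|Y\|+1)=:\eps$.

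For the harder ``$\Rightarrow$'' direction, I plan to extract the two conditions $Y(I-P)=0$ and $(I-P)Y=0$ (which together give $Y=PYP$, using Hermiticity) by probing $X+tY\ge0$ at carefully chosen test vectors. First, taking any $v\in\ker X$, the function $t\mapsto\langle(X+tY)v,v\rangle = t\langle Yv,v\rangle$ can be nonnegative on a neighborhood of $0$ only if $\langle Yv,v\rangle=0$. Next, to kill the off-diagonal part, I would plug in $v+sw$ where $v\in\ker X$ and $w\in\Cpx^n$ is arbitrary, and exploit the asymmetry in orders of $s$. Expanding and using what has already been established, one gets an inequality of the form
\[
r\,\langle Xw,w\rangle + 2t\,\RealPart(e^{-i\theta}\langle Yv,w\rangle) + tr\,\langle Yw,w\rangle \ge 0
\]
after writing $s=re^{i\theta}$ and dividing through by $r$. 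Choosing $\theta$ so that $e^{-i\theta}\langle Yv,w\rangle=|\langle Yv,w\rangle|$ and letting $r\to 0$ leaves $2t\,|\langle Yv,w\rangle|\ge 0$ for all small $t$ of either sign, which forces $\langle Yv,w\rangle=0$. Since $w$ was arbitrary, $Yv=0$, i.e.\ $Y(I-P)=0$; Hermiticity then yields $(I-P)Y=0$ and hence $Y=PYP$.

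The only real subtlety is the step where the cross term $\langle Yv,w\rangle$ is linear in $s$ while the term $\langle Xw,w\rangle$ coming from the positive-definite part of $X$ is quadratic in $s$, so that sending $s\to 0$ isolates the cross term; the choice of phase $\theta$ is just to turn the cross term into something that changes sign when $t$ does. Everything else is a routine expansion of $\langle(X+tY)(v+sw),v+sw\rangle$ together with the elementary observations $Xv=0$ and $\langle Yv,v\rangle=0$ already in hand.
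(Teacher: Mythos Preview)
Your argument is correct. Both implications go through as written: the eigenvalue bound in the ``$\Leftarrow$'' direction is clean, and in the ``$\Rightarrow$'' direction the two--scale trick (sending $r\to0$ to isolate the cross term, then using both signs of $t$) is valid.

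The paper argues differently. After a unitary change of basis so that $P=\diag(1,\dots,1,0,\dots,0)$, the ``$\Leftarrow$'' direction is dispatched by continuity of the determinant on the $r\times r$ positive--definite block. For ``$\Rightarrow$'' the paper does not use a scaling argument in a quadratic form; instead, assuming $Y\ne PYP$, it finds indices $i\le r<j$ so that the $2\times2$ compression of $X+tY$ to $\operatorname{span}\{e_i,e_j\}$ has the shape $\begin{pmatrix}x+ta&tb\\t\bar b&tc\end{pmatrix}$ with $(b,c)\ne(0,0)$, and then reads off that $\det=txc+t^2(ac-|b|^2)$ is negative for suitably small $t$. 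So the paper's route is a concrete $2\times2$ determinant computation after diagonalizing $P$, whereas yours is a coordinate--free quadratic--form argument. Your approach avoids the preliminary basis change and the case split on $c$; the paper's approach is perhaps more tangible in that it exhibits an explicit $2\times2$ principal minor that fails positivity.
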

\begin{proof}
If $X=0$ then this is trivially true, so suppose $X\ne0$.
After conjugating with a unitary, we may without loss of generality assume
$P=\diag(1,\ldots,1,0,\ldots,0)$ with $\rank(X)=\rank(P)=r$.
Then $PXP$, thought of as an $r\times r$ matrix, is positive definite.
By continuity of the determinant, we see that if $Y=PYP$, then $Y$ enjoys the property
described above.

Conversely, if $Y\ne PYP$, then we may choose two standard basis vectors $e_i$ and $e_j$ for $i\le r<j$,
such that the compressions of $X$ and $Y$ to the subspace spanned by $e_i$ and $e_j$ are given by the matrices
\[
\Xh=\left(\begin{matrix}
x&0\\0&0
\end{matrix}\right),\qquad
\Yh=\left(\begin{matrix}
a&b\\\overline{b}&c
\end{matrix}\right)
\]
for some $x>0$, $a,c\in\Reals$ and $b\in\Cpx$ with $c$ and $b$ not both zero.
But
\[
\det(\Xh+t\Yh)=txc+t^2(ac-|b|^2).
\]
If $c\ne0$, then $\det(\Xh+t\Yh)<0$ for all nonzero $t$ sufficiently small in magnitude and of the appropriate sign,
while if  $c=0$ then $b\ne0$ and $\det(\Xh+t\Yh)<0$ for all $t\ne0$.
\end{proof}

\begin{prop}\label{prop:rankX}
Let $n\in\Nats$, let $X\in\Theta_n$ and let $P$ be the support projection of $X$.
A necessary and sufficient condition for $X$ to be an extreme point
of $\Theta_n$ is that there be no nonzero Hermitian $n\times n$ matrix $Y$ having
zero diagonal and satisfying $Y=PYP$.
Consequently, if $X$ is an extreme point of $\Theta_n$, then $\rank(X)\le\sqrt n$.
\end{prop}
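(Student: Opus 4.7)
The plan is to derive the extremality criterion from Lemma~\ref{lem:XtY} together with a standard convex--geometric reformulation, and then to obtain the rank bound by a dimension count on the space of admissible perturbations.

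First I would use the standard fact that $X$ fails to be extreme in $\Theta_n$ if and only if there exists a nonzero Hermitian matrix $Y$ such that $X + tY \in \Theta_n$ for all sufficiently small real $t$. Indeed, any such $Y$ yields the nontrivial decomposition $X = \tfrac12\bigl((X+\eps Y)+(X-\eps Y)\bigr)$ for small $\eps>0$; conversely, if $X=\tfrac12(A+B)$ with $A,B\in\Theta_n$ distinct, then $Y=\tfrac12(A-B)$ has the required property by convexity of $\Theta_n$. Now membership in $\Theta_n$ is the conjunction of positive semidefiniteness with all diagonal entries equal to $1$. Applied to $X+tY$ for $t$ in an interval around $0$, the diagonal constraint forces $Y$ to have zero diagonal, while Lemma~\ref{lem:XtY} identifies the positivity constraint with the relation $Y = PYP$. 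Combining these gives the stated necessary and sufficient condition.

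For the consequence on the rank, let $r = \rank(X) = \rank(P)$. The real vector space
\[
V = \{Y \in M_n(\Cpx) : Y^* = Y,\ Y = PYP\}
\]
is naturally identified with the Hermitian part of $PM_n(\Cpx)P \cong M_r(\Cpx)$, and hence has real dimension $r^2$. The vanishing of the $n$ diagonal entries $Y_{11},\ldots,Y_{nn}$ imposes at most $n$ real--linear equations on $V$. If $r^2 > n$, the resulting subspace is nonzero, so a nonzero $Y$ meeting both conditions of the first part exists, and $X$ is not extreme. Contrapositively, $X$ extreme forces $r^2 \le n$, i.e.\ $\rank(X) \le \sqrt{n}$.

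No step looks genuinely delicate: Lemma~\ref{lem:XtY} packages the analytic content of the first assertion, and the rank bound is a transparent linear--algebra count on the space of admissible directions. The only point meriting care is the convex--geometric translation from ``extreme point'' to ``no nonzero Hermitian perturbation stays in $\Theta_n$'', which I would state explicitly as above.
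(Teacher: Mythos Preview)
Your proof is correct and follows essentially the same approach as the paper: both reduce extremality to the nonexistence of a nonzero Hermitian zero-diagonal $Y$ with $Y=PYP$ via Lemma~\ref{lem:XtY}, and both obtain the rank bound by a dimension count. The only cosmetic difference is that the paper phrases the count as intersecting an $r^2$-dimensional subspace with the $(n^2-n)$-dimensional space of zero-diagonal Hermitians inside the $n^2$-dimensional ambient space, whereas you impose $n$ linear conditions on an $r^2$-dimensional space; these are equivalent.
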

\begin{proof}
$X$ is an extreme point of $\Theta_n$ if and only if there is no nonzero Hermitian $n\times n$
matrix $Y$ such that $X+tY\in\Theta_n$ for all $t\in\Reals$ sufficiently small in magnitude.
Now use Lemma~\ref{lem:XtY} and the fact that $\Theta_n$ consists of the positive semidefinite matrices
with all diagonal values equal to $1$.

For the final statement, if $r=\rank(X)$ then the set of Hermitian matrices with support projection
under $P$ is a real vector space of dimension $r^2$, while the space of $n\times n$
Hermitian matrices with zero diagonal has dimension $n^2-n$.
If $r^2>n$, then the intersection of these two spaces is nonzero.
\end{proof}

\begin{prop}\label{prop:extrframe}
Let $X\in\Theta_n$.
Suppose $f_1,\ldots,f_n$ is a frame consisting of $n$ unit vectors in $\Cpx^r$, where $r=\rank(X)$,
so that $X=F^*F$ with $F=(f_1,\ldots,f_n)$ is the corresponding frame operator.
(See Lemma~\ref{lem:frame}.)
Then $X$ is an extreme point of $\Theta_n$ if and only if
the only $r\times r$ self--adjoint matrix $Z$ satisfying $\langle Zf_j,f_j\rangle=0$ for all $j\in\{1,\ldots,n\}$
is the zero matrix.
\end{prop}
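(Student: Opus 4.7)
The plan is to reduce to the criterion already established in Proposition~\ref{prop:rankX}, namely that $X$ is extreme in $\Theta_n$ iff there is no nonzero Hermitian $n\times n$ matrix $Y$ with zero diagonal and $Y=PYP$, and then translate the ``$Y=PYP$'' condition into a statement about $r\times r$ matrices using the frame $F$.

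First, I would set up the key linear map. Define $\Phi\colon \{Z\in M_r(\Cpx)\colon Z=Z^*\}\to\{Y\in M_n(\Cpx)\colon Y=Y^*\}$ by $\Phi(Z)=F^*ZF$. Since $\operatorname{range}(F^*)=\operatorname{range}(X)=\operatorname{range}(P)$, every $Y$ of the form $F^*ZF$ automatically satisfies $Y=PYP$. I would then argue that $\Phi$ is a real-linear bijection onto $\{Y=Y^*\colon Y=PYP\}$: injectivity follows because $F$ has rank $r$, so $F^*ZF=0$ forces $\langle Zx,y\rangle=0$ for all $x,y\in\Cpx^r$, and surjectivity then follows by a dimension count, since both spaces have real dimension $r^2$ (the target space, being the Hermitian operators on the $r$-dimensional subspace $\operatorname{range}(P)$).

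The second ingredient is the diagonal computation: the $(j,j)$ entry of $F^*ZF$ is $\langle F^*ZFe_j,e_j\rangle=\langle ZFe_j,Fe_j\rangle=\langle Zf_j,f_j\rangle$, so ``$Y$ has zero diagonal'' translates exactly into ``$\langle Zf_j,f_j\rangle=0$ for all $j$.''

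Combining these with Proposition~\ref{prop:rankX}: $X$ is extreme iff the only Hermitian $Y$ with $Y=PYP$ and zero diagonal is $Y=0$; under the bijection $\Phi$ this says the only Hermitian $Z$ with $\langle Zf_j,f_j\rangle=0$ for every $j$ is $Z=0$. The main thing to get right is the bijection argument in step one; once that is in place, the rest is a direct translation. The dimension equality is the delicate point, and it is what uses the full-rank hypothesis on the frame (i.e.\ that $r=\rank(X)$, so that $F^*$ is injective with range exactly equal to $\operatorname{range}(P)$).
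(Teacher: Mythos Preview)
Your proposal is correct and follows essentially the same approach as the paper's own proof: both introduce the linear map $Z\mapsto F^*ZF$ from $M_r(\Cpx)_{s.a.}$ onto $PM_n(\Cpx)_{s.a.}P$, verify it is a bijection using that $F$ has rank $r$, observe that $(F^*ZF)_{jj}=\langle Zf_j,f_j\rangle$, and then invoke Proposition~\ref{prop:rankX}. Your write-up simply spells out the injectivity/surjectivity and the diagonal computation a bit more explicitly than the paper does.
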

\begin{proof}
Since $F$ is an $r\times n$ matrix of rank $r$,
the map $M_r(\Cpx)_{s.a.}\to M_n(\Cpx)_{s.a.}$ given by $Z\mapsto F^*ZF$ is an injective
linear map onto $PM_n(\Cpx)_{s.a.}P$, where $P$ is the support projection of $X$.
If $Y=F^*ZF$, then $Y_{jj}=\langle Zf_j,f_j\rangle$.
Thus, the condition for $X$ to be extreme now follows from the characterization found in
Proposition~\ref{prop:rankX}.
\end{proof}

\begin{prop}\label{prop:rank1}
Let $n\in\Nats$
and suppose $X\in\Theta_n$ satifies $\rank(X)=1$.
Then $X$ is an extreme point of $\Theta_n$
and $X\in\Fc_n$.
Moreover, using the notation introduced in Remark~\ref{rem:GNS}, we have
\begin{multline}\label{eq:cconvrank1}
\conv\{X\in\Theta_n\mid\rank(X)=1\}= \\
=\{\big(\tau(U_i^*U_j)\big)_{1\le i,j\le n}\mid\tau:\Afr\to\Cpx\text{ a positive trace, }
 \tau(1)=1,\,\pi_\tau(\Afr)\text{ commutative}\}
\end{multline}
and this set is closed in $\Theta_n$.
\end{prop}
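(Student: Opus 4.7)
The plan is to dispatch the three assertions in turn, using Proposition~\ref{prop:rankX} for extremeness, a $1 \times 1$ unitary realization for $\Fc_n$ membership, and the Gelfand--Naimark theorem for the convex hull identity. Since $\rank X = 1$ and $X_{ii} = 1$, one is forced to write $X = vv^*$ with $v \in \Tcirc^n$, so the support projection is $P = \frac{1}{n} vv^*$, whose diagonal entries are each $\frac{1}{n}$. Any Hermitian $Y$ satisfying $Y = PYP$ has rank at most one and is therefore a real scalar multiple of $P$; the vanishing-diagonal condition then forces that scalar to be $0$, so extremeness follows from Proposition~\ref{prop:rankX}. Taking $U_i = \bar v_i \in \Tcirc$ as a $1 \times 1$ unitary matrix gives $\tr_1(U_i^* U_j) = v_i \bar v_j = X_{ij}$, exhibiting $X \in \Fc_n$.

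Denote the left- and right-hand sides of~\eqref{eq:cconvrank1} by $S$ and $T$. For $S \subseteq T$, I would first verify that $T$ is convex: the GNS representation of a convex combination $\alpha \tau_1 + (1-\alpha)\tau_2$ of positive unital traces on $\Afr$ is unitarily equivalent to a subrepresentation of $\pi_{\tau_1} \oplus \pi_{\tau_2}$, whose image is a subalgebra of the commutative algebra $\pi_{\tau_1}(\Afr) \oplus \pi_{\tau_2}(\Afr)$ and hence commutative. Every rank-$1$ element of $\Theta_n$ lies in $T$ via the $1 \times 1$ realization above (the generated $*$-algebra being $\Cpx$), so $S \subseteq T$.

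For $T \subseteq S$, given $\tau$ with $\pi_\tau(\Afr)$ commutative, the Gelfand--Naimark theorem identifies the operator-norm closure of $\pi_\tau(\Afr)$ with $C(K)$ for some compact Hausdorff space $K$; the unitaries correspond to continuous maps $u_i : K \to \Tcirc$, and $\tau$ corresponds to a regular probability measure $\mu$ on $K$, yielding
\[
X_{ij} = \int_K \overline{u_i(k)} \, u_j(k) \, d\mu(k).
\]
This displays $X$ as the barycenter under $\mu$ of the continuous family $k \mapsto (\overline{u_i(k)} u_j(k))_{ij}$ of rank-$1$ correlation matrices. Approximating $\mu$ weak-$*$ by finitely supported probability measures then exhibits $X$ as a limit of finite convex combinations of rank-$1$ matrices, so $X \in \overline S$.

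The final step, which simultaneously completes the inclusion $T \subseteq S$ and proves the closedness assertion, is to show that $S$ itself is closed. The set $R$ of rank-$1$ correlation matrices is the image of the compact torus $\Tcirc^n$ under the continuous map $v \mapsto vv^*$ and hence compact. Since $\Theta_n$ sits in the finite-dimensional real affine space of Hermitian matrices with diagonal equal to $1$, Carath\'eodory's theorem expresses every point of $\conv R$ as a convex combination of at most $n^2 - n + 1$ elements of $R$, and a routine compactness argument on the corresponding parameter space then shows that $S = \conv R$ is compact. I expect this finite-dimensional compactness reduction to be the only step demanding care; the remainder of the argument reduces cleanly to Gelfand--Naimark, to convexity of $T$, and to the extremeness criterion of Proposition~\ref{prop:rankX}.
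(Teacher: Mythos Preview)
Your proof is correct and follows essentially the same route as the paper's: a rank-one frame/vector realization for extremeness and $\Fc_n$-membership, convexity of the commutative-GNS side for $S\subseteq T$, Gelfand--Naimark plus approximation by characters (equivalently, finitely supported measures) for $T\subseteq\overline S$, and Carath\'eodory on the compact set of rank-one correlation matrices for closedness of $S$. The only cosmetic difference is that you invoke Proposition~\ref{prop:rankX} directly on the support projection while the paper goes through the frame criterion of Proposition~\ref{prop:extrframe}; these are equivalent here.
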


\begin{notation}
We let $\Cc_n$ denote the set given in~\eqref{eq:cconvrank1}.
Thus, we have $\Cc_n\subseteq\Fc_n$.
Moreover, ({\em cf}\/ Remark~\ref{rem:GNS}), $\Cc_n$ is the set of matrices as in~\eqref{eq:X}
where $(U_1,\ldots,U_n)$ run over all $n$--tuples of commuting unitarires in C$^*$--algebras $A$ with faithful tracial
state $\tau$.
\end{notation}

\begin{proof}[Proof of Proposition~\ref{prop:rank1}]
By Lemma~\ref{lem:frame}, we have $X=F^*F$ where $F=(f_1,\ldots,f_n)$
for complex numbers $f_j$ with $|f_j|=1$.
Using Proposition~\ref{prop:extrframe}, we see immediately that $X$ is an extreme point of $\Theta_n$.
Thinking of each $f_j$ as a $1\times1$ unitary, we have $X\in\Fc_n$
and, moreover, $X=\big(\tau(U_i^*U_j)\big)_{1\le i,j\le n}$, where $\tau:\Afr\to\Cpx$
is the character defined by $\tau(U_i)=f_i$;
in fact, it is apparent that every character on $\Afr$ yields a rank one element of $\Theta_n$.
Since the set of traces $\tau$ on $\Afr$ having $\pi_\tau(\Afr)$ commutative is convex,
this implies the inclusion $\subseteq$ in~\eqref{eq:cconvrank1}.

That the left--hand--side of~\eqref{eq:cconvrank1} is compact follows from Caratheodory's theoem,
because the rank one projections form a compact set.
If $\tau:\Afr\to\Cpx$ is a positive trace with $\tau(1)=1$ and $\pi_\tau(\Afr)$ commutative,
then $\tau=\psi\circ\pi_\tau$ for a state $\psi$ on the C$^*$--algebra completion of
$\pi_\tau(\Afr)$.
Since every state on a unital, commutative C$^*$--algebra is in the closed convex hull of
the characters of that C$^*$--algebra, $\tau$ is itself the limit in norm of a sequence of
finite convex combinations of characters of $\Afr$.
Thus, $X=\big(\tau(U_i^*U_j)\big)_{1\le i,j\le n}$ is the limit of a sequence of finite convex
combinations of rank one elements of $\Theta_n$,
and we have $\supseteq$ in~\eqref{eq:cconvrank1}.
\end{proof}

\begin{remark}\label{rem:Fcproperties}
We see immediately from~\eqref{eq:cconvrank1} that $\Cc_n$ is a closed convex set that is closed
under conjugation with diagonal unitary matrices and permutation matrices;
also, since the set of rank one elements of $\Theta_n$ is closed under taking Schur products,
so is the set $\Cc_n$.
Furthermore, since $\Cc_n$ lies in a vector space of real dimension $m:=n^2-n$, by Caratheodory's theorem
every element of $\Cc_n$ is a convex combination of not more than $m+1$ rank one elements of $\Theta_n$.
\end{remark}

An immediate application of Propositions~\ref{prop:rankX} and~\ref{prop:rank1} is the following.
\begin{cor}\label{cor:Theta3}
The extreme points of $\Theta_3$ are precisely the rank one elements of $\Theta_3$.
Moreover, we have
\[
\Cc_3=\Fc_3=\Gc_3=\Theta_3.
\]
\end{cor}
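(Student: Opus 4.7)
The plan is to combine the rank bound for extreme points (Proposition~\ref{prop:rankX}) with the fact that rank one elements of $\Theta_n$ lie in $\Cc_n$ (Proposition~\ref{prop:rank1}), and then apply the Krein--Milman theorem.

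First I would handle the extreme points. By Proposition~\ref{prop:rankX}, any extreme point $X$ of $\Theta_3$ satisfies $\rank(X) \le \sqrt{3} < 2$. Since the diagonal entries of $X$ are all $1$, we have $X \ne 0$, so $\rank(X) = 1$. Conversely, Proposition~\ref{prop:rank1} says every rank one element of $\Theta_3$ is extreme. This establishes the first assertion.

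Next I would deduce the chain of equalities. We already know the inclusions $\Cc_3 \subseteq \Fc_3 \subseteq \Gc_3 \subseteq \Theta_3$, so it suffices to prove $\Theta_3 \subseteq \Cc_3$. Since $\Theta_3$ is a compact convex set in a finite dimensional real affine space, the Krein--Milman theorem (or simply Caratheodory, together with the description of the extreme points) implies that $\Theta_3$ is the closed convex hull of its rank one elements. By Proposition~\ref{prop:rank1}, every rank one element of $\Theta_3$ belongs to $\Cc_3$, and by Remark~\ref{rem:Fcproperties} the set $\Cc_3$ is closed and convex. Hence $\Theta_3 \subseteq \Cc_3$, and the four sets coincide.

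There is essentially no obstacle here; the work has already been done in the preceding propositions. The only thing worth being careful about is invoking closedness and convexity of $\Cc_3$ (from Remark~\ref{rem:Fcproperties}), so that the closed convex hull of the rank one matrices really is contained in $\Cc_3$ rather than just in some larger set.
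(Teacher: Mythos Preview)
Your proposal is correct and follows exactly the route the paper intends: the corollary is stated as ``an immediate application of Propositions~\ref{prop:rankX} and~\ref{prop:rank1},'' and your write-up simply spells out that immediate application. One small simplification: since $\Cc_n$ is by definition the convex hull of the rank one elements and Proposition~\ref{prop:rank1} already asserts this hull is closed, you could cite Proposition~\ref{prop:rank1} directly rather than Remark~\ref{rem:Fcproperties}, but either reference works.
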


\begin{remark}\label{rem:Z=0}
Let $X\in\Gc_n$ and take $A$, $\tau$ and $U_1,\ldots,U_n$ as in Definition~\ref{def:G}
so that~\eqref{eq:X} holds, and assume without loss of generality that $\tau$ is faithful on $A$.
If we identify $M_n(A)$ with $A\otimes M_n(\Cpx)$, then we have
$X=n(\tau\otimes\id_{M_n(\Cpx)})(P)$, where $P$ is the projection
\[
P=\frac1n
\left(\begin{matrix}U_1^* \\ U_2^* \\ \vdots \\ U_n^*\end{matrix}\right)
(U_1\;U_2\;\cdots\;U_n)
\]
in $M_n(A)$.
If
$c=(c_1,\ldots,c_n)^t\in\Cpx^n$ is
such that $Xc=0$,
then this yields $\tau(Z^*Z)=0$, where $Z=c_1U_1+\cdots+c_nU_n$.
Since $\tau$ is a faithful, we have $Z=0$.
\end{remark}

\begin{prop}\label{prop:rank2}
Let $n\in\Nats$.
If $X\in\Gc_n$ and $\rank(X)\le2$, then $X\in\Cc_n$.
\end{prop}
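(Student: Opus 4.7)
The plan is to exploit the fact that $\rank X = 2$ forces $U_1, \ldots, U_n$ to span just a two-dimensional subspace of the ambient algebra, and then to use left multiplication by $U_1^*$ to transport this tuple into a commutative C$^*$-subalgebra.

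The case $\rank X = 1$ is immediate from Proposition~\ref{prop:rank1}, so I would assume $\rank X = 2$. Write $X_{ij} = \tau(U_i^* U_j)$ as in Definition~\ref{def:G} with $\tau$ faithful. By Remark~\ref{rem:Z=0}, the map $c \mapsto \sum_i c_i U_i$ has kernel equal to $\ker X$, so $\mathrm{span}\{U_1, \ldots, U_n\}$ has dimension exactly $2$ in $A$. Since $\Gc_n$ (and the target $\Cc_n$) are invariant under conjugation by permutation matrices, I would permute so that $U_1, U_2$ are linearly independent; then for each $j \ge 3$ there are scalars $\alpha_j, \beta_j \in \Cpx$ with $U_j = \alpha_j U_1 + \beta_j U_2$.

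The key step is to set $W_j := U_1^* U_j$ for $j = 1, \ldots, n$. Each $W_j$ is a unitary in $A$, with $W_1 = 1$, $W_2 = U_1^* U_2$, and $W_j = \alpha_j + \beta_j W_2$ for $j \ge 3$. Hence every $W_j$ lies in the commutative unital C$^*$-subalgebra $C^*(W_2) \subseteq A$, so the $W_j$ pairwise commute. Using $U_1 U_1^* = 1$ one computes
$$\tau(W_i^* W_j) = \tau(U_i^* U_1 U_1^* U_j) = \tau(U_i^* U_j) = X_{ij},$$
which exhibits $X$ as the Gram matrix of commuting unitaries in $(A,\tau)$; hence $X \in \Cc_n$.

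I do not anticipate a serious obstacle. The only observation that really has to be made is that left multiplication by the unitary $U_1^*$ preserves the Gram matrix while moving the two-dimensional linear span $\Cpx U_1 + \Cpx U_2$ into the two-dimensional subspace $\Cpx + \Cpx W_2$ of the commutative algebra $C^*(W_2)$; everything else is bookkeeping.
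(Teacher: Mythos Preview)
Your proof is correct and follows essentially the same approach as the paper: dispose of rank one via Proposition~\ref{prop:rank1}, replace $U_j$ by $U_1^*U_j$ so that the first unitary becomes $I$, then use Remark~\ref{rem:Z=0} together with $\rank X=2$ to obtain linear relations forcing all the resulting unitaries into a single commutative C$^*$--algebra. The only cosmetic difference is that the paper argues pairwise (for each $i,j\ge2$ it finds a relation $c_1I+c_iU_i+c_jU_j=0$ to show $U_i$ and $U_j$ commute), whereas you more efficiently fix a basis $U_1,U_2$ once and observe that every $W_j$ lands in $C^*(W_2)$.
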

\begin{proof}
If $\rank(X)=1$, then this follows from Propostion~\ref{prop:rank1},
so assume $\rank(X)=2$.
Let $\tau:\Afr\to\Cpx$ be a positive, unital trace such that
$X=\big(\tau(U_i^*U_j)\big)_{1\le i,j\le n}$
and let $\pi_\tau:\Afr\to B(L^2(\Afr,\tau))$ the the $*$--representation
as described in Remark~\ref{rem:GNS}.
Let $\sigma:\Afr\to\pi_\tau(\Afr)$ be the $*$--representation defined by
$\sigma(U_i)=\pi_\tau(U_1)^*\pi_\tau(U_i)$ for each $i\in\{1,\ldots,n\}$ and let
$\tau'=\tau\circ\sigma$.
Then $\tau'$ is a positive, unital trace on $\Afr$ and the matrix
$\big(\tau'(U_i^*U_j)\big)_{1\le i,j\le n}$ is equal to $X$.
Furthermore, $\pi_{\tau'}(U_1)=I$.
Consequently, we may without loss of generality assume $\pi_\tau(U_1)=I$.

Let $e_1,\ldots,e_n$ denote the standard basis vectors of $\Cpx^n$.
Let $i,j\in\{2,\ldots,n\}$, with $i\ne j$.
Since $\rank(X)=2$, there are $c_1,c_i,c_j\in\Cpx$ with $c_1\ne0$ such that
$X(c_1e_1+c_ie_i+c_je_j)=0$.
By Remark~\ref{rem:Z=0}, we have $\pi_\tau(c_1I+c_iU_i+c_jU_j)=0$.
We do not have $c_i=c_j=0$, so assume $c_i\ne0$.
If $c_j=0$, then $\pi_\tau(U_i)$ is a scalar multiple of the identity,
while if $c_j\ne0$, then $\pi_\tau(U_i)$ and $\pi_\tau(U_j)$ generate the same C$^*$--algebra,
which is commutative.
In either case, we have that the $*$--algebras generated by $\pi_\tau(U_i)$ and $\pi_\tau(U_j)$
commute with each other.
Therefore, $\pi_\tau(\Afr)$ is commutative, and $X\in\Cc_n$.
\end{proof}

\begin{cor}\label{cor:Theta4}
$\Gc_4\ne\Theta_4$.
\end{cor}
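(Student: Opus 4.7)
The plan is to exhibit a concrete $X\in\Theta_4$ of rank $2$ which is extreme in $\Theta_4$, and then use Proposition~\ref{prop:rank2} together with the fact that $\Cc_4$ is the convex hull of its rank--one elements to derive a contradiction if $X$ were to lie in $\Gc_4$.

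First, I would invoke Proposition~\ref{prop:rankX} to observe that, since $\sqrt{4}=2$, an extreme point of $\Theta_4$ has rank at most $2$, so rank $2$ extreme points are in principle possible. To produce one, I would use Proposition~\ref{prop:extrframe}: it suffices to give a frame $f_1,\ldots,f_4$ of unit vectors in $\Cpx^2$ such that the only self--adjoint $2\times 2$ matrix $Z$ with $\langle Zf_j,f_j\rangle=0$ for $j=1,2,3,4$ is $Z=0$. The natural candidate is
\[
f_1=\binom{1}{0},\quad f_2=\binom{0}{1},\quad f_3=\frac{1}{\sqrt2}\binom{1}{1},\quad f_4=\frac{1}{\sqrt2}\binom{1}{i}.
\]
Writing a general $2\times 2$ self--adjoint $Z$ in terms of its four real parameters (diagonal entries and real/imaginary parts of the off--diagonal), the four conditions $\langle Zf_j,f_j\rangle=0$ translate to four linearly independent real linear equations, forcing $Z=0$. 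This is a short, routine dimension count (in fact the generic $4$--element frame in $\Cpx^2$ works, so any easy explicit choice will do). Let $X=F^*F$ be the resulting matrix in $\Theta_4$, which then has rank $2$ and, by Proposition~\ref{prop:extrframe}, is extreme in $\Theta_4$.

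Next, I would argue by contradiction: suppose $X\in\Gc_4$. By Proposition~\ref{prop:rank2}, since $\rank(X)=2$, we have $X\in\Cc_4$. By Remark~\ref{rem:Fcproperties}, every element of $\Cc_4$ is a finite convex combination of rank--one elements of $\Theta_4$, so there exist rank--one matrices $X_1,\ldots,X_m\in\Theta_4$ and positive weights $\lambda_1,\ldots,\lambda_m$ summing to $1$ with $X=\sum_k\lambda_kX_k$. Because $\rank(X)=2>1$, at least two of the $X_k$ must appear with strictly positive weight and be distinct from $X$, so this is a nontrivial convex decomposition of $X$ in $\Theta_4$, contradicting the extremality of $X$. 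Hence $X\notin\Gc_4$, proving $\Gc_4\ne\Theta_4$.

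The only real work is verifying that the chosen frame yields an extreme point; everything else is direct application of Propositions~\ref{prop:rank2} and~\ref{prop:extrframe} and Remark~\ref{rem:Fcproperties}. The mild subtlety to watch is to keep the logical direction straight: we need rank exactly $2$ (not $1$) so that membership in $\Cc_4$ forces a genuinely nontrivial convex combination, which is what collides with extremality in $\Theta_4$.
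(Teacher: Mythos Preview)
Your proposal is correct and follows essentially the same route as the paper: exhibit a rank--two extreme point of $\Theta_4$ via an explicit frame in $\Cpx^2$ and use Proposition~\ref{prop:rank2} to force it into $\Cc_4$, where extremality contradicts the decomposition into rank--one matrices. The paper's frame is $f_4=\frac{1}{\sqrt2}\binom{i}{1}$ rather than your $\frac{1}{\sqrt2}\binom{1}{i}$, but this is cosmetic (the two differ by a unimodular scalar, hence the resulting $X$'s differ by conjugation with a diagonal unitary), and the verification that $Z=0$ is identical in spirit.
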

\begin{proof}
Combining Proposition~\ref{prop:rank2} and Proposition~\ref{prop:rank1},
we see that $\Gc_4$ has no extreme points of rank $2$.
It will suffice to find an extreme point $X$ of $\Theta_4$ with
$\rank(X)=2$.
By Proposition~\ref{prop:extrframe}, it will suffice to find four unit vectors
$f_1,\ldots,f_4$ spanning $\Cpx^2$ such that the only self--adjoint
$Z\in M_2(\Cpx)$
satisfying $\langle Zf_i,f_i\rangle=0$ for all $i=1,\ldots,4$ is the
zero matrix.
It is easily verified that the frame
\[
f_1=\begin{pmatrix}1\\0\end{pmatrix},\quad
f_2=\begin{pmatrix}0\\1\end{pmatrix},\quad
f_3=\begin{pmatrix}1/\sqrt2\\1/\sqrt2\end{pmatrix},\quad
f_4=\begin{pmatrix}i/\sqrt2\\1/\sqrt2\end{pmatrix}
\]
does the job, and, with $F=(f_1,f_2,f_3,f_4)$, this yields the matrix
\begin{equation}\label{eq:X4}
X=F^*F=
\left(\begin{matrix}
1&0&\frac1{\sqrt2}&\frac{i}{\sqrt2} \\[1ex]
0&1&\frac1{\sqrt2}&\frac1{\sqrt2} \\[1ex]
\frac1{\sqrt2}&\frac1{\sqrt2}&1&\frac{1+i}2 \\[1ex]
\frac{-i}{\sqrt2}&\frac1{\sqrt2}&\frac{1-i}2&1
\end{matrix}\right)\in\Theta_4\backslash\Gc_4\,.
\end{equation}
\end{proof}

\begin{remark}
We cannot have $\Cc_n=\Fc_n$ for all $n$, because by
an easy a modification of Kirchberg's proof of Proposition~4.6 of~\cite{Ki93},
this would imply that $M_2(\Cpx)$ can be faithfully represented in a commutative von Neumann algebra.
(This argument shows that for some $n$ there must be two--by--two unitaries $V_1,\ldots,V_n$ such that
the matrix $\big(\tr_2(V_i^*V_j)\big)_{1\le i,j\le n}$ does not belong to $\Cc_n$.)
In fact, in Proposition~\ref{prop:F6} we will show $\Fc_6\ne\Cc_6$.
However, we don't know whether $\Fc_n=\Cc_n$ holds or not for $n=4$ or $n=5$.
\end{remark}

\section{Real matrices}
\label{sec:Reals}

The main result of this section is the following, which easily follows from the usual representation
of the Clifford algebra.

\begin{thm}\label{thm:Reals}
For every $n\in\Nats$, we have
\[
M_n(\Reals)\cap\Theta_n\subseteq\Fc_n\,.
\]
\end{thm}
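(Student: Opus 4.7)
The plan is to reduce the statement, via Lemma~\ref{lem:frame}, to showing that any frame $f_1,\ldots,f_n$ of unit vectors in $\Reals^r$ can be realized (exactly, not merely approximately) as the inner product data of $n$ self--adjoint unitaries in some matrix algebra, with respect to the normalized trace. Given $X\in M_n(\Reals)\cap\Theta_n$ with $r=\rank(X)$, Lemma~\ref{lem:frame} furnishes such a real frame with $X_{ij}=\langle f_i,f_j\rangle$, so it suffices to produce self--adjoint unitaries $V_1,\ldots,V_n\in\Uc_k$ (for some $k$) with $\tr_k(V_i^*V_j)=\langle f_i,f_j\rangle$.

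The tool is the standard matrix representation of the complex Clifford algebra. I would fix self--adjoint matrices $e_1,\ldots,e_r\in M_k(\Cpx)$ (for instance $k=2^{\lceil r/2\rceil}$, built by the usual tensor--product construction from Pauli matrices) satisfying
\[
e_ie_j+e_je_i=2\delta_{ij}I.
\]
For a unit vector $f=(a_1,\ldots,a_r)\in\Reals^r$ set $U_f=\sum_{i=1}^r a_ie_i$. The reality of the $a_i$ and self--adjointness of the $e_i$ give $U_f^*=U_f$, and the anticommutation relations give
\[
U_f^2=\sum_i a_i^2 e_i^2+\sum_{i<j}a_ia_j(e_ie_j+e_je_i)=\|f\|^2 I=I,
\]
so each $U_f$ is a self--adjoint unitary in $M_k(\Cpx)$.

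The theorem will then follow from the trace identity $\tr_k(e_ie_j)=\delta_{ij}$, which gives
\[
\tr_k(U_{f_i}^*U_{f_j})=\sum_{p,q}(f_i)_p(f_j)_q\,\tr_k(e_pe_q)=\langle f_i,f_j\rangle=X_{ij},
\]
so that taking $V_j=U_{f_j}$ places $X$ in the (pre--closure) set of Definition~\ref{def:F}, hence in $\Fc_n$. The only step that is not entirely automatic is this trace identity; I would justify it by the standard fact that in the irreducible representation the $2^r$ (suitably ordered) monomials $e_{i_1}\cdots e_{i_m}$ with $i_1<\cdots<i_m$ form a linearly independent family, each nonidentity monomial being (up to a scalar of modulus one) a product of tensor factors of Pauli matrices and therefore of trace zero. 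This is the only point requiring care; everything else reduces to inserting the definition of $U_{f_j}$ and reading off the result.
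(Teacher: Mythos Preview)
Your proposal is correct and follows essentially the same approach as the paper: use Lemma~\ref{lem:frame} to realize $X$ via a real frame, then map the frame vectors to self--adjoint unitaries via the Clifford algebra representation. The one place where you work harder than necessary is the trace identity $\tr_k(e_ie_j)=\delta_{ij}$: rather than invoking the explicit Pauli--tensor structure and tracelessness of nonidentity monomials, the paper simply applies $\tr_k$ to the anticommutation relation $e_ie_j+e_je_i=2\delta_{ij}I$ and uses cyclicity of the trace, which yields $2\tr_k(e_ie_j)=2\delta_{ij}$ immediately (and, at the level of $U_{f_i}$, gives $\tr_k(U_{f_i}U_{f_j})=\langle f_i,f_j\rangle$ in one line).
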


We first recall the representation of the Clifford algebra.
Let $\Lambda$ be a linear map from a real Hilbert space $H$ into
the bounded, self--adjoint operators $B(\KEu)_{s.a.}$, for some complex Hilbert space $\KEu$,
satisfying
\begin{equation}\label{eq:Lambdaxy}
\Lambda(x)\Lambda(y)+\Lambda(y)\Lambda(x)=2\langle x,y\rangle I_{H},\qquad(x,y\in H).
\end{equation}
The real algebra generated by range of $\Lambda$ is uniquely determined by $H$ and called
the real Clifford algebra.\\

Consider a real Hilbert space $H$ of finite dimension $r$ with its canonical basis $\{e_i\}$. Let
\begin{gather*}
U=\left( \begin{array}{cc}
1 & 0 \\
0 & -1 \\
\end{array} \right),
\quad
V=\left( \begin{array}{cc}
0 & 1 \\
1 & 0 \\
\end{array} \right),
\quad
I_2=\left( \begin{array}{cc}
1 & 0 \\
0 & 1 \\
 \end{array} \right).
\end{gather*}
Then the real Clifford algebra of $H$ has the following representation by $2^r\times2^r$ matrixes
\[
\Lambda(x)=\sum \lambda_i U^{\otimes i-1}\otimes V\otimes I_2^{\otimes (n-i)},
\]
where $x=\sum \lambda_i e_i$.
It easy to check that the relation \eqref{eq:Lambdaxy} is satisfied.
Moreover if $||x||=1$ then $\Lambda(x)$ is symmetry, i.e. $\Lambda(x)^* = \Lambda(x)$ and $\Lambda(x)^2=I$.

\begin{proof}[Proof of Theorem~\ref{thm:Reals}.]
Let $r$ be the rank of $X$.
By Lemma~\ref{lem:frame}, there are unit vectors $f_1,\ldots,f_n\in\Reals^r$ such that $X_{i,j}=\langle f_i,f_j\rangle$
for all $i$ and $j$.
Taking $\Lambda$ as described above, we get $2^r\times 2^r$ unitary matrices $\Lambda(f_i)$ (in fact, they are symmetries),
and from~\eqref{eq:Lambdaxy} we have $\tr(\Lambda(f_i)\Lambda(f_j))=\langle f_i,f_j\rangle$.
\end{proof}

Below is the result for real matrices that is entirely analogous to Proposition~\ref{prop:rankX}.

\begin{prop}\label{prop:rankRealX}
Let $n\in\Nats$, let $X\in M_n(\Reals)\cap\Theta_n$ and let $P$ be the support projection of $X$.
A necessary and sufficient condition for $X$ to be an extreme point
of $ M_n(\Reals)\cap\Theta_n$ is that there be no nonzero Hermitian real $n\times n$ matrix $Y$ having
zero diagonal and satisfying $Y=PYP$.
Consequently, if $X$ is an extreme point of $M_n(\Reals)\cap\Theta_n$ and $r=\rank(X)$,
then $r(r+1)/2\le n$.
\end{prop}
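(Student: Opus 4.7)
The plan is to mirror the proof of Proposition~\ref{prop:rankX} exactly, with Lemma~\ref{lem:XtY} again as the workhorse. An element $X\in M_n(\Reals)\cap\Theta_n$ fails to be extreme in $M_n(\Reals)\cap\Theta_n$ precisely when there is a nonzero real symmetric matrix $Y$ with zero diagonal such that $X+tY$ is positive semidefinite for all $t\in\Reals$ sufficiently small in magnitude: the zero-diagonal condition comes from the requirement that all diagonal entries equal $1$, and the realness of $Y$ is what keeps the perturbation inside $M_n(\Reals)$. Because a real symmetric matrix is in particular Hermitian, Lemma~\ref{lem:XtY} applies verbatim and translates this condition to $Y=PYP$. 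The support projection $P$ of a real positive semidefinite matrix is itself real (for instance by the spectral theorem applied to the symmetric matrix $X$, or by continuous functional calculus), so the equivalence stays inside the real symmetric category. This yields the characterization part.

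For the rank bound, I would do a dimension count inside the real vector space $V$ of $n\times n$ real symmetric matrices, which has dimension $n(n+1)/2$. Inside $V$, the matrices $Y$ satisfying $Y=PYP$ form a subspace naturally identified with the space of real symmetric $r\times r$ matrices, hence of dimension $r(r+1)/2$. The matrices in $V$ with zero diagonal form a subspace of codimension $n$, so of dimension $n(n-1)/2$. If the sum of these two dimensions exceeds $\dim V$, their intersection is nontrivial, which by the characterization just proved would contradict extremality of $X$. Thus extremality forces
\[
\frac{r(r+1)}{2}+\frac{n(n-1)}{2}\le\frac{n(n+1)}{2},
\]
which simplifies to $r(r+1)/2\le n$.

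I do not anticipate a serious obstacle. The only point requiring a moment's care is checking that the support projection of a real positive semidefinite matrix is real, which is immediate, so that the complex statement of Lemma~\ref{lem:XtY} transfers cleanly to the real-symmetric setting. Everything else is a direct transcription of the argument of Proposition~\ref{prop:rankX}, adjusting only the dimensions of the two relevant subspaces of $V$.
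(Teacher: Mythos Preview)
Your proposal is correct and is precisely the paper's approach: the paper's proof simply says ``just like the proof of Proposition~\ref{prop:rankX}, the only difference being that the dimension of $PM_n(\Reals)_{s.a.}P$ for a projection $P$ of rank $r$ is $r(r+1)/2$,'' and you have spelled out exactly this. Your extra remark that the support projection $P$ of a real positive semidefinite matrix is real is the one small point the paper leaves implicit, and your handling of it is fine.
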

\begin{proof}
This is just like the proof of Proposition~\ref{prop:rankX},
the only difference being that the dimension of $P M_n(\Reals)_{s.a.}P$
for a projection $P$ of rank $r$ is $r(r+1)/2$.
\end{proof}

\begin{cor}\label{cor:n<6}
If $n\le5$, then
\begin{equation}\label{eq:MnRFc}
M_n(\Reals)\cap\Theta_n\subseteq\Cc_n.
\end{equation}
\end{cor}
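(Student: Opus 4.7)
The plan is to reduce to the extreme points of $M_n(\Reals)\cap\Theta_n$ and use the rank bound from Proposition~\ref{prop:rankRealX} together with Proposition~\ref{prop:rank2}. First I would observe that $M_n(\Reals)\cap\Theta_n$ is a compact, convex subset of a finite--dimensional real affine space, so by the Krein--Milman theorem it equals the convex hull of its extreme points. Since $\Cc_n$ is closed and convex (see the notation paragraph and Remark~\ref{rem:Fcproperties}), it suffices to show that every extreme point of $M_n(\Reals)\cap\Theta_n$ already lies in $\Cc_n$.

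Next, let $X$ be an extreme point of $M_n(\Reals)\cap\Theta_n$ and set $r=\rank(X)$. Proposition~\ref{prop:rankRealX} gives the bound $r(r+1)/2\le n\le 5$. A direct check of the inequality shows $r\le 2$, since $r=3$ would already require $n\ge 6$.

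Now I would invoke Theorem~\ref{thm:Reals} to conclude that $X\in\Fc_n$, and in particular $X\in\Gc_n$. Since $\rank(X)\le 2$, Proposition~\ref{prop:rank2} then yields $X\in\Cc_n$. As this holds for every extreme point and $\Cc_n$ is closed and convex, we obtain the desired inclusion $M_n(\Reals)\cap\Theta_n\subseteq\Cc_n$.

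I do not expect a serious obstacle here: the result is a direct synthesis of Theorem~\ref{thm:Reals}, the rank bound of Proposition~\ref{prop:rankRealX}, and Proposition~\ref{prop:rank2}. The only point worth double--checking is that $r(r+1)/2\le 5$ forces $r\le 2$, which is precisely why the argument stops working at $n=6$ (where $r=3$ becomes admissible and Proposition~\ref{prop:rank2} no longer applies).
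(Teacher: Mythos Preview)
Your proof is correct and follows essentially the same route as the paper: use the rank bound from Proposition~\ref{prop:rankRealX} to force $r\le 2$ for extreme points when $n\le 5$, apply Theorem~\ref{thm:Reals} to place $X$ in $\Gc_n$, invoke Proposition~\ref{prop:rank2} to land in $\Cc_n$, and conclude by convexity and closedness of $\Cc_n$. The only cosmetic difference is that the paper cites Proposition~\ref{prop:rank1} rather than Remark~\ref{rem:Fcproperties} for the closed/convex property of $\Cc_n$, and leaves the Krein--Milman step implicit.
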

\begin{proof}
From Proposition~\ref{prop:rankRealX}, we see that every extreme point $X$ of $M_n(\Reals)\cap\Theta_n$ for $n\le5$
has rank $r\le2$.
But $X\in\Fc_n\subseteq\Gc_n$, by Theorem~\ref{thm:Reals},
so using Proposition~\ref{prop:rank2}, it follows that all extreme points of $M_n(\Reals)\cap\Theta_n$
lie in $\Cc_n$.
Since $\Cc_n$ is closed and convex (see Proposition~\ref{prop:rank1}), the inclusion~\eqref{eq:MnRFc} follows.
\end{proof}

Of course, we also have the result for real matrices (and real frames) that is analogous to Proposition~\ref{prop:extrframe},
which is stated below.
The proof is the same.
\begin{prop}\label{prop:extrRealframe}
Let $X\in M_n(\Reals)\cap\Theta_n$.
Suppose $f_1,\ldots,f_n$ is a frame consisting of $n$ unit vectors in $\Reals^r$, where $r=\rank(X)$,
so that $X=F^*F$ with $F=(f_1,\ldots,f_n)$ is the corresponding frame operator.
(See Lemma~\ref{lem:frame}.)
Then $X$ is an extreme point of $M_n(\Reals)\cap\Theta_n$ if and only if
the only real Hermitian $r\times r$ matrix $Z$ satisfying $\langle Zf_j,f_j\rangle=0$ for all $j\in\{1,\ldots,n\}$
is the zero matrix.
\end{prop}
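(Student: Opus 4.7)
The plan is to mimic the proof of Proposition \ref{prop:extrframe} almost verbatim, using Proposition \ref{prop:rankRealX} in place of Proposition \ref{prop:rankX}. The key is to transport the condition on real symmetric $n \times n$ matrices $Y$ with $Y = PYP$ and zero diagonal to a condition on real symmetric $r \times r$ matrices $Z$, via the frame $F$.

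First I would set up the relevant linear map. Since $F$ is a real $r \times n$ matrix of rank $r$, consider the linear map
\[
\Phi : M_r(\Reals)_{s.a.} \longrightarrow M_n(\Reals)_{s.a.}, \qquad Z \longmapsto F^{*}ZF.
\]
Because $F$ has rank $r$, the map $\Phi$ is injective. Moreover, writing $F = \diag(\lambda_1,\ldots,\lambda_r)^{1/2}V^{*}$ as in the proof of Lemma \ref{lem:frame} (with real $V$, which is possible here since $X$ is real), one sees that the image of $\Phi$ is exactly $PM_n(\Reals)_{s.a.}P$, where $P = VV^{*}$ is the support projection of $X$.

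Next I would translate the diagonal condition. A direct computation gives $(F^{*}ZF)_{jj} = \langle Zf_j, f_j\rangle$ for each $j \in \{1,\ldots,n\}$, so $Y = \Phi(Z)$ has zero diagonal if and only if $\langle Zf_j, f_j\rangle = 0$ for all $j$. Combining this with the injectivity and the identification of the image, I obtain a bijective correspondence between nonzero real symmetric $r \times r$ matrices $Z$ with $\langle Zf_j, f_j\rangle = 0$ for all $j$ and nonzero real symmetric $n \times n$ matrices $Y$ with zero diagonal satisfying $Y = PYP$.

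Finally, I would invoke Proposition \ref{prop:rankRealX}: $X$ is an extreme point of $M_n(\Reals) \cap \Theta_n$ exactly when no such nonzero $Y$ exists, which by the correspondence above is precisely the condition that no such nonzero $Z$ exists. There is no real obstacle here; the argument is entirely parallel to the complex case, with $M_r(\Cpx)_{s.a.}$ replaced by $M_r(\Reals)_{s.a.}$ throughout, and the only thing to verify (which is automatic from the real version of Lemma \ref{lem:frame}) is that the frame vectors and the intertwining isometry $V$ can all be chosen real.
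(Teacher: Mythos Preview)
Your proposal is correct and follows exactly the same approach as the paper: the paper simply states ``The proof is the same'' (as Proposition~\ref{prop:extrframe}), and you have carried out precisely that, transporting the argument to the real setting via the map $Z\mapsto F^*ZF$ and invoking Proposition~\ref{prop:rankRealX} in place of Proposition~\ref{prop:rankX}.
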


Although Corollary~\ref{cor:n<6} shows that every element of $M_n(\Reals)\cap\Theta_n$ for $n\le5$
is in the closed convex hull of the rank one operators in $\Theta_n$, it is not true that every
element of $M_n(\Reals)\cap\Theta_n$ lies in the closed convex hull of rank one operators in $M_n(\Reals)\cap\Theta_n$,
even for $n=3$,
as the following example shows.

\begin{example}
Consider the frame
\[
f_1=\begin{pmatrix}1\\0\end{pmatrix},\quad
f_2=\begin{pmatrix}0\\1\end{pmatrix},\quad
f_3= \frac1{\sqrt2}\begin{pmatrix}1\\1\end{pmatrix}
\]
of three unit vectors in $\Reals^2$.
It is easily verified that the only real Hermitian $2\times 2$ matrix $Z$ such that $\langle Zf_i,f_i\rangle=0$
for all $i=1,2,3$ is the zero matrix.
Thus, by Proposition~\ref{prop:extrRealframe},
\[
X=\left(\begin{matrix}
1&0&\frac1{\sqrt2} \\[1ex]
0&1&\frac1{\sqrt2} \\[1ex]
\frac1{\sqrt2}&\frac1{\sqrt2}&1
\end{matrix}\right)
\]
is a rank--two extreme point of $M_3(\Reals)\cap\Theta_3$.
However, an explicit decomposition as a convex combination of rank one operators in $\Theta_3$ is
\[
X=\frac12\left(\begin{matrix}
1&i&\frac{1+i}{\sqrt2} \\[1ex]
-i&1&\frac{1-i}{\sqrt2} \\[1ex]
\frac{1-i}{\sqrt2}&\frac{1+i}{\sqrt2}&1
\end{matrix}\right)
+\frac12\left(\begin{matrix}
1&-i&\frac{1-i}{\sqrt2} \\[1ex]
i&1&\frac{1+i}{\sqrt2} \\[1ex]
\frac{1+i}{\sqrt2}&\frac{1-i}{\sqrt2}&1
\end{matrix}\right).
\]
\end{example}

\begin{prop}\label{prop:F6}
We have
\[
M_6(\Reals)\cap\Theta_6\not\subseteq\Cc_6\,.
\]
Thus, we have $\Fc_6\ne\Cc_6$.
\end{prop}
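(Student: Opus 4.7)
The plan is to produce a rank-$3$ extreme point $X$ of $M_6(\Reals)\cap\Theta_6$ and then show, via a rank-reduction observation, that $X$ cannot lie in $\Cc_6$. Combined with Theorem~\ref{thm:Reals}, which gives $X\in\Fc_6$, this will yield $\Fc_6\ne\Cc_6$. Proposition~\ref{prop:rankRealX} permits extreme points of rank $3$ when $n=6$, since $3\cdot 4/2=6$, and by Proposition~\ref{prop:extrRealframe} it suffices to produce six unit vectors $f_1,\ldots,f_6\in\Reals^3$ whose outer products $f_jf_j^t$ are linearly independent in the $6$-dimensional space of real symmetric $3\times 3$ matrices. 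Taking $f_1,f_2,f_3$ to be the standard basis and $f_4,f_5,f_6$ to be the normalized midpoints $(e_i+e_j)/\sqrt 2$ for $1\le i<j\le 3$ does the job: the first three give the three diagonal matrix units, and the last three, after subtracting their diagonal contributions, produce the three off-diagonal matrix units. Set $X=F^tF$ with $F=(f_1,\ldots,f_6)$; this $X$ is a rank-$3$ extreme point of $M_6(\Reals)\cap\Theta_6$.

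The central observation is the inclusion $\Cc_n\cap M_n(\Reals)\subseteq\cconv(S)$, where
\[
S=\{Y\in M_n(\Reals)\cap\Theta_n:\rank(Y)\le 2\}.
\]
For any $v\in\Cpx^n$ with $|v_k|=1$, write $v=a+ib$ with $a,b\in\Reals^n$. Then $a_k^2+b_k^2=1$ for each $k$, so $aa^t+bb^t$ is a real correlation matrix of rank at most $2$, and
\[
\RealPart(vv^*)=aa^t+bb^t\in S.
\]
Therefore taking real parts sends each finite convex combination of rank-one elements of $\Theta_n$ into $\conv(S)$. Since $\Cc_n$ is (by Proposition~\ref{prop:rank1}) the closed convex hull of the rank-one elements of $\Theta_n$, any real element of $\Cc_n$ is a limit of such real parts and hence lies in $\cconv(S)$.

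Assume for contradiction that $X\in\Cc_6$; then $X\in\cconv(S)$. The set $S$ is closed (reality, positive semidefiniteness, the vanishing of all $3\times 3$ minors, and unit diagonal are closed conditions) and bounded, so $\cconv(S)$ is a compact convex subset of $M_6(\Reals)\cap\Theta_6$. Because $X$ is extreme in the larger set $M_6(\Reals)\cap\Theta_6$, any expression $X=tY_1+(1-t)Y_2$ with $Y_1,Y_2\in\cconv(S)$ and $0<t<1$ forces $Y_1=Y_2=X$, so $X$ is extreme in $\cconv(S)$. Milman's converse to the Krein--Milman theorem then places $X$ in the closure of the generating set $S$, hence in $S$ itself, forcing $\rank(X)\le 2$ and contradicting $\rank(X)=3$. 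I expect the main obstacle to be spotting the rank-reduction decomposition $\RealPart(vv^*)=aa^t+bb^t$ in the second paragraph; once that identity is in hand, the Milman argument and the construction of an explicit rank-$3$ extreme point are essentially formal.
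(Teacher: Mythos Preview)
Your proof is correct and takes a genuinely different route from the paper's.

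The paper also builds a rank--three extreme point of $M_6(\Reals)\cap\Theta_6$ from an explicit frame (theirs uses $f_6=(e_1+e_2+e_3)/\sqrt3$ rather than a third midpoint, but either frame works), and then argues directly against membership in $\Cc_6$: from the nullspace of $X$ and Remark~\ref{rem:Z=0} one obtains linear relations such as $U_4=\tfrac1{\sqrt2}(U_1+U_2)$ among the commuting unitaries, and evaluating at a point of the Gelfand spectrum produces unimodular scalars $\zeta_j$ that cannot simultaneously satisfy all the relations. Your argument replaces this pointwise contradiction by the structural observation that the real part of any rank--one correlation matrix $vv^*$ is $aa^t+bb^t\in S$, hence $\Cc_n\cap M_n(\Reals)\subseteq\conv(S)$; extremality plus Milman then forces $\rank(X)\le2$. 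This is cleaner and in fact proves more: \emph{no} rank--$\ge3$ extreme point of $M_n(\Reals)\cap\Theta_n$ can lie in $\Cc_n$, for any $n$, without any computation specific to the chosen frame. The paper's approach, on the other hand, makes the obstruction concrete and visible at the level of scalar equations on the unit circle.
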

\begin{proof}
We construct an example of $X\in(M_6(\Reals)\cap\Theta_6)\backslash\Cc_6$.
In fact, it will be a rank--three extreme point of $M_6(\Reals)\cap\Theta_6$.

Consider the frame
\begin{alignat*}{3}
f_1&=\begin{pmatrix}1\\0\\0\end{pmatrix},\quad &
f_2&=\begin{pmatrix}0\\1\\0\end{pmatrix},\quad &
f_3&=\begin{pmatrix}0\\0\\1\end{pmatrix},\quad \\
f_4&= \frac1{\sqrt2}\begin{pmatrix}1\\1\\0\end{pmatrix},\quad &
f_5&= \frac1{\sqrt2}\begin{pmatrix}0\\1\\1\end{pmatrix},\quad &
f_6&= \frac1{\sqrt3}\begin{pmatrix}1\\1\\1\end{pmatrix}
\end{alignat*}
of six unit vectors in $\Reals^3$.
It is easily verified that the only real Hermitian $3\times 3$ matrix $Z$ such that $\langle Zf_i,f_i\rangle=0$
for all $i\in\{1,\ldots,6\}$ is the zero matrix.
Thus, by Proposition~\ref{prop:extrRealframe},
\[
X=\left(\begin{matrix}
1 & 0 & 0 & \frac{1}{\sqrt{2}} & 0 & \frac{1}{\sqrt{3}} \\[1ex]
0 & 1 & 0 & \frac{1}{\sqrt{2}} & \frac{1}{\sqrt{2}} & \frac{1}{\sqrt{3}} \\[1ex]
0 & 0 & 1 & 0 & \frac{1}{\sqrt{2}} & \frac{1}{\sqrt{3}} \\[1ex]
\frac{1}{\sqrt{2}} & \frac{1}{\sqrt{2}} & 0 & 1 & \frac{1}{2} & \sqrt{\frac{2}{3}} \\[1ex]
0 & \frac{1}{\sqrt{2}} & \frac{1}{\sqrt{2}} & \frac{1}{2} & 1 & \sqrt{\frac{2}{3}} \\[1ex]
\frac{1}{\sqrt{3}} & \frac{1}{\sqrt{3}} & \frac{1}{\sqrt{3}} & \sqrt{\frac{2}{3}} & \sqrt{\frac{2}{3}}  & 1
\end{matrix}\right)
\]
is a rank--three extreme point of $M_6(\Reals)\cap\Theta_6$.
The nullspace of $X$ is spanned by the vectors
\begin{align*}
v_1&=\textstyle (\frac1{\sqrt2},\frac1{\sqrt2},0,-1,0,0)^t \\
v_2&=\textstyle (0,\frac1{\sqrt2},\frac1{\sqrt2},0,-1,0)^t \\
v_3&=\textstyle (\frac1{\sqrt3},\frac1{\sqrt3},\frac1{\sqrt3},0,0,-1)^t.
\end{align*}

Suppose, to obtain a contradiction, that we have $X\in\Cc_6$.
Then there is a commutative C$^*$--algebra $A=C(\Omega)$ with a faithful tracial state $\tau$
and there are unitaries $I=U_1,U_2,\ldots,U_6\in A$ such that $X=\big(\tau(U_i^*U_j)\big)_{1\le i,j\le 6}$.
Taking the vectors $v_1$, $v_2$ and $v_3$, above,
by Remark~\ref{rem:Z=0} we have
\begin{align}
U_4&=\frac1{\sqrt2}(U_1+U_2) \label{eq:U4} \\
U_5&=\frac1{\sqrt2}(U_2+U_3) \label{eq:U5} \\
U_6&=\frac1{\sqrt3}(U_1+U_2+U_3). \label{eq:U6}
\end{align}
Fixing any $\omega\in\Omega$, we have that $\zeta_j:=U_j(\omega)$ is a point on the unit circle $\Tcirc$, ($1\le j\le 6$).
From~\eqref{eq:U4} and $|\zeta_4|=1$, we get $\zeta_1=\pm i\zeta_2$ and  similarly from~\eqref{eq:U5} we get $\zeta_3=\pm i\zeta_2$.
However, from~\eqref{eq:U6}, we then have
\[
\zeta_6\in\{\frac{1-2i}{\sqrt3}\zeta_2,\frac1{\sqrt3}\zeta_2,\frac{1+2i}{\sqrt3}\zeta_2\},
\]
which contradicts $|\zeta_6|=|\zeta_2|=1$.
\end{proof}

\section{Nonempty interior}
\label{sec:int}

In this section, we show that the interior of $\Fc_n$ and, in fact, of $\Cc_n$, is nonempty,
when considered as a subset of $\Theta_n$.
(Since $\Cc_n=\Theta_n$ for $n=1,2,3$, this needs proving only for $n\ge4$.)

Given $X\in\Theta_n$, let 
\begin{align*}
a_X&=\sup\{t\in[0,1]\mid tX+(1-t)I\in\Fc_n\} \\
c_X&=\sup\{t\in[0,1]\mid tX+(1-t)I\in\Cc_n\}.
\end{align*}
Of course, $c_X\le a_X$.
We now show that $c_X$ is bounded below by a nonzero constant that depends only on $n$.
In particular, we have that the identity element lies in the interior of $\Cc_n$, when this is taken
as a subset of the affine space of self--adjoint matrices having all diagonal entries equal to $1$.

\begin{prop}\label{prop:int}
Let $n\in\Nats$, $n\ge3$, and let $X\in \Theta_{n}$.
Then
\begin{equation}\label{eq:CX}
c_X\ge\frac6{n^2-n}\,.
\end{equation}
Moreover, if
$\lambda_0$ is the smallest eigenvalue of $X$, then
\begin{equation}\label{eq:CXlambda}
c_X\ge\min(\frac6{(n^2-n)(1-\lambda_0)},1).
\end{equation}
\end{prop}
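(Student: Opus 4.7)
The plan is to deduce the first bound~\eqref{eq:CX} from Corollary~\ref{cor:Theta3}, which gives $\Cc_3=\Theta_3$, by averaging over all $\binom{n}{3}$ three--element subsets of indices. For each triple $T=\{i,j,k\}\subseteq\{1,\ldots,n\}$, the principal $3\times 3$ submatrix $X|_T$ is itself a correlation matrix and so lies in $\Theta_3=\Cc_3$. I will then manufacture an $n\times n$ matrix $M^{(T)}\in\Cc_n$ that agrees with $X$ on the $T$--block and has zero off--diagonal entries elsewhere, and average.

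To build $M^{(T)}$, I would write $X|_T=(\tau(U_a^*U_b))_{a,b\in T}$ using commuting unitaries $U_i,U_j,U_k$ in a (necessarily commutative) C$^*$--algebra $A$ with faithful tracial state $\tau$, then pass to the tensor product $A\otimes C(\Tcirc^{n-3})$, equipped with $\tau\otimes\lambda$ where $\lambda$ is integration against Haar measure. Placing the $n-3$ coordinate functions (Haar--distributed unitaries of trace zero) in the remaining slots keeps the $n$--tuple commuting and, because products of distinct Haar coordinates also have trace zero, produces exactly $M^{(T)}$. Since each ordered off--diagonal pair $(a,b)$ is contained in precisely $n-2$ triples, a count gives
\[
\frac{1}{\binom{n}{3}}\sum_{T}M^{(T)}=\frac{6}{n(n-1)}\,X+\Bigl(1-\frac{6}{n(n-1)}\Bigr)I.
\]
This is a convex combination of elements of $\Cc_n$ and so lies in $\Cc_n$ by Remark~\ref{rem:Fcproperties}, yielding~\eqref{eq:CX}.

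For~\eqref{eq:CXlambda}, if $\lambda_0=1$ then $X=I$ and there is nothing to prove, so assume $\lambda_0<1$ and set $Y=(X-\lambda_0 I)/(1-\lambda_0)$, which is positive semidefinite with unit diagonal and so belongs to $\Theta_n$. A direct substitution gives
\[
tX+(1-t)I=sY+(1-s)I,\qquad s=t(1-\lambda_0),
\]
and applying~\eqref{eq:CX} to $Y$ shows the right--hand side is in $\Cc_n$ whenever $s\le 6/(n(n-1))$, i.e.\ $t\le 6/((n(n-1))(1-\lambda_0))$. Intersecting with the trivial constraint $t\le 1$ delivers~\eqref{eq:CXlambda}. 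The only step requiring any care is verifying that the Haar--extension construction preserves commutativity and produces the correct zeros in the off--diagonal entries outside the $T$--block; because the ambient algebra is commutative and the coordinate Haar unitaries are stochastically independent, this reduces to computing $\int_{\Tcirc}z\,d\lambda(z)=0$, so no real obstacle arises.
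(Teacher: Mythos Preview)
Your proof is correct and follows essentially the same route as the paper: both arguments exploit $\Cc_3=\Theta_3$ on each $3\times3$ principal submatrix, extend to an $n\times n$ element of $\Cc_n$ that vanishes off the chosen triple, and average so that every off--diagonal entry is hit $(n-2)$ times out of $\binom{n}{3}$. The only cosmetic difference is that the paper packages the extension as a block--diagonal matrix $B_\sigma=(X|_T)\oplus I_{n-3}$ conjugated by a permutation matrix (summing over the group $G=\{\sigma\in S_n\mid\sigma(1)<\sigma(2)<\sigma(3)\}$, which just introduces a harmless $(n-3)!$ multiplicity), whereas you spell out the Haar--unitary tensor construction explicitly; your version has the virtue of making transparent why the cross--block entries vanish, which the paper leaves to the reader. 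The derivation of~\eqref{eq:CXlambda} from~\eqref{eq:CX} via $Y=(X-\lambda_0 I)/(1-\lambda_0)$ is identical in both.
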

\begin{proof}
We have $X=(x_{ij})_{i,j=1}^n$ with $x_{ii}=1$ for all $i=1,\ldots, n$.
Denote $G=\{ \sigma \in S_n \mid \sigma(1)< \sigma(2) <\sigma(3)\}$.
Then
\[
\# G = \binom n 3 (n-3)!
\]
Let $U_\sigma = (u_{ij})$ be the
permutation unitary matrix where $u_{ij} = \delta_{i, \sigma(i)}$.
Then $U^* X U = (x_{\sigma^{-1}(i) \sigma^{-1}(j)})_{i,j}$. Define
the block-diagonal matrix 
\[
B_\sigma = \left(\begin{array}{ccc}
1 & x_{\sigma(1)\sigma(2)} & x_{\sigma(1)\sigma(3)}\\
x_{\sigma(2)\sigma(1)} & 1 & x_{\sigma(2)\sigma(3)}\\
x_{\sigma(3)\sigma(1)} & x_{\sigma(3)\sigma(2)} & 1\\
\end{array}\right)\oplus I_{n-3}\,.
\]
Using Corollary~\ref{cor:Theta3} (and Remark~\ref{rem:Fcproperties}), we easily see $B_\sigma\in\Cc_n$.

Let $J_\sigma =\{ (\sigma(1), \sigma(2)), (\sigma(1), \sigma(3)),
(\sigma(2), \sigma(3))\}$.
Put  $X_\sigma = U^* B_\sigma U$. Then
\[
(X_\sigma)_{k\ell} = \begin{cases} 0, & \text{ if } (k,\ell)\not\in  \{(1,1), \ldots, (n,n)\}\cup J_\sigma, \\
1, & \text{ if }k=\ell \\
x_{k\ell}, & \text{ if } (k,\ell)\in J_\sigma\,.
\end{cases}
\]

Since for any $k<\ell$ we have
\begin{gather*}
 \# \{\sigma \in G \mid \sigma(1) =k, \sigma(2)=\ell \text{ or } \sigma(1) =k, \sigma(3)=\ell  \text{ or } \sigma(2) =k, \sigma(3)=\ell \} = \\
 ((n-\ell) + (\ell-k-1) +(k-1)) (n-3)! = (n-2)!
\end{gather*}
it follows that matrix
\[
X'= \frac{1}{\# G} \sum_{\sigma \in G} X_\sigma
\]
has entries
$x'_{ii}=1$, and $x'_{k\ell} =\frac{6}{n^2-n} x_{k\ell}$ if $k\ne \ell$.

Since $\Cc_n$ is closed under conjugating with permutation matrices, we have
$X_\sigma \in \Cc_n$ for all $\sigma\in G$.
But then the average
$X'$ also belongs to $\Cc_n$.
This implies~\eqref{eq:CX}.

Now~\eqref{eq:CXlambda} is an easy consequence of~\eqref{eq:CX}.
Indeed, if $\lambda_0=1$, then $X$ is the identity matrix and $c_X=1$.
If $\lambda_0<1$, then let $Y=\frac1{1-\lambda_0}(X-\lambda_0I)$.
We have $Y\in\Theta_n$, and
\[
(1-t)I+tY=(1-\frac t{1-\lambda_0})I+\frac t{1-\lambda_0}X.
\]
This implies $c_X\ge\min(1,\frac{c_Y}{1-\lambda_0})$.
\end{proof}

Given an $n\times n$ matrix $A=(a_{ij})_{1\le i,j\le n}$, let $\Abar$ denote matrix whose $(i,j)$ entry is the the complex conjugate of $a_{ij}$.
If $A$ is self--adjoint, then so is $\Abar$, and these two matrices have the same eigenvalues (and multiplicities).
Consequently, $A-\Abar$ has spectrum that is symmetric about zero.

\begin{lemma}\label{lem:dcx}
Let $X\in\Theta_n$ and let $d>0$ be such that
\[
I+d\left(\frac{X-\Xbar}2\right)\in\Fc_n\,.
\]
Then $a_X\ge d/(d+1)$.
If $n\le 5$ and 
\begin{equation}\label{eq:inFc}
I+d\left(\frac{X-\Xbar}2\right)\in\Cc_n\,,
\end{equation}
then $c_X\ge d/(d+1)$.
\end{lemma}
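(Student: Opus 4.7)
The plan is to decompose $X$ into its real and imaginary parts (in the Hermitian sense) and then realize $\frac{d}{d+1}X+\frac{1}{d+1}I$ as a convex combination of the hypothesized matrix $Y:=I+d(X-\Xbar)/2$ with the real part of $X$.

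First I would observe that, since $X\in\Theta_n$ is Hermitian with real diagonal entries, writing $X=A+iB$ with $A$ real symmetric and $B$ real antisymmetric yields $A=(X+\Xbar)/2$ and $iB=(X-\Xbar)/2$. The matrix $A$ is real symmetric with $1$'s on the diagonal, and for any real vector $v$ one has $v^TXv=v^TAv\ge0$ (since $v^TBv=0$ by antisymmetry); hence $A\in M_n(\Reals)\cap\Theta_n$. By Theorem~\ref{thm:Reals}, $A\in\Fc_n$, and when $n\le5$ Corollary~\ref{cor:n<6} further gives $A\in\Cc_n$.

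Next, using convexity of $\Fc_n$, the combination $\frac{1}{d+1}Y+\frac{d}{d+1}A$ lies in $\Fc_n$, and a direct computation yields
\[
\frac{1}{d+1}Y+\frac{d}{d+1}A=\frac{1}{d+1}I+\frac{d}{2(d+1)}(X-\Xbar)+\frac{d}{2(d+1)}(X+\Xbar)=\frac{1}{d+1}I+\frac{d}{d+1}X.
\]
Thus, with $t=d/(d+1)$, we have $tX+(1-t)I\in\Fc_n$, which is precisely what is needed to conclude $a_X\ge d/(d+1)$. For the second assertion, under hypothesis~\eqref{eq:inFc} the identical convex combination carried out inside $\Cc_n$ (which is convex by Remark~\ref{rem:Fcproperties}) gives $c_X\ge d/(d+1)$.

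There is no real obstacle in the argument; the only point worth highlighting is the observation that the ``missing'' real part $(X+\Xbar)/2$ is automatically a real correlation matrix, so that Theorem~\ref{thm:Reals} (respectively, Corollary~\ref{cor:n<6}) supplies exactly the second ingredient needed to complete the convex combination.
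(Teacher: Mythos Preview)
Your proof is correct and follows essentially the same route as the paper's: both recognize that $(X+\Xbar)/2$ is a real correlation matrix, invoke Theorem~\ref{thm:Reals} (and Corollary~\ref{cor:n<6} for $n\le5$) to place it in $\Fc_n$ (respectively $\Cc_n$), and then form the identical convex combination $\tfrac{1}{d+1}Y+\tfrac{d}{d+1}\cdot\tfrac{X+\Xbar}{2}=\tfrac{1}{d+1}I+\tfrac{d}{d+1}X$. Your added justification that $(X+\Xbar)/2$ is positive semidefinite is a nice touch; the paper simply asserts this (it also follows directly since $\Xbar$ is positive semidefinite whenever $X$ is).
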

\begin{proof}
The matrix $(X+\Xbar)/2$ is real and lies in $\Theta_n$.
Using Theorem~\ref{thm:Reals}, we have $(X+\Xbar)/2\in\Fc_n$.
Thus, we have
\[
\frac1{d+1}I+\frac d{d+1}X=\frac1{d+1}\left(I+d\left(\frac{X-\Xbar}2\right)\right)+\frac d{d+1}\left(\frac{X+\Xbar}2\right)
\in\Fc_n\,.
\]
If $n\le5$ and~\eqref{eq:inFc} holds, then we similarly apply Corollary~\ref{cor:n<6}.
\end{proof}

\begin{example}
Consider the matrix $X$ as in~\eqref{eq:X4}, from Corollary~\ref{cor:Theta4}.
From Proposition~\ref{prop:int} and closedness of $\Fc_n$, we know
$\frac12\le c_X\le a_X<1$.
It would be interesting to know the precise value of $a_X$, in order to have a concrete example
of an element on the boundary of $\Fc_4$ in $\Theta_4$.

Since
\[
\frac{X-\Xbar}2=
\begin{pmatrix}
0&0&0&\frac i{\sqrt2} \\
0&0&0&0 \\
0&0&0&\frac i2 \\
-\frac i{\sqrt2}&0&-\frac i2&0
\end{pmatrix}
\]
has norm $\sqrt3/2$ and since it is conjugate by a permutation matrix to an element of $M_3(\Cpx)\oplus\Cpx$,
using Corollary~\ref{cor:Theta3} we have that~\eqref{eq:inFc} holds with $d=2/\sqrt3$.
A slightly better value is obtained by letting $Y$ be the result of conjugation
of $X$ with the diagonal unitary $\diag(1,1,1,e^{-i\pi/4})$.
Then
\[
\frac{Y-\Ybar}2=
\begin{pmatrix}
0&0&0&\frac i2 \\
0&0&0&-\frac i2 \\
0&0&0&0 \\
-\frac i2&\frac i2&0&0
\end{pmatrix}
\]
which has norm $1/\sqrt2$ and similarly yields $d=\sqrt2$.
Applying Lemma~\ref{lem:dcx} gives $c_X=c_Y\ge\sqrt2/(1+\sqrt2)\approx0.586$.
\end{example}

\medskip
\noindent{\bf Acknowledgment.}
The authors thank Vern Paulsen for kindly directing them to the literature on extreme
correlation matrices.

\bibliographystyle{plain}

\end{document}